\documentclass[11pt]{article}

\usepackage[OT1]{fontenc}
\usepackage[margin=1in]{geometry}
\usepackage{amsmath,amssymb,amsthm,mathtools}
\usepackage{natbib}
\usepackage[x11names]{xcolor}
\usepackage{dsfont}
\usepackage[colorlinks=true,citecolor=blue,linkcolor=blue,urlcolor=blue]{hyperref}

\newcommand{\E}{\mathbb{E}}
\newcommand{\mP}{\mathbb{P}}
\newcommand{\R}{\mathbb{R}}
\newcommand{\eps}{\epsilon}
\newcommand{\one}{\mathbf{1}}
\newcommand{\proj}{P_{\one^\perp}}

\newcommand{\calF}{\mathcal{F}}
\newcommand{\SN}{\mathfrak{S}}
\DeclareMathOperator{\Var}{Var}
\DeclareMathOperator{\Cov}{Cov}
\DeclareMathOperator{\Hyp}{Hyp}

\theoremstyle{plain}
\newtheorem{theorem}{Theorem}
\newtheorem{corollary}{Corollary}
\newtheorem{proposition}{Proposition}
\newtheorem{lemma}{Lemma}
\theoremstyle{definition}

\theoremstyle{remark}
\newtheorem{remark}{Remark}

\definecolor{DarkBlue}{rgb}{0,0,0.55}

\title{A Sharper Hoeffding Bound for Weighted Sums of Exchangeable Random Variables}
\author{
Seongchan Lee\textsuperscript{\dag}
\quad
Ilmun Kim\textsuperscript{\dag}
\\[0.6em]
\small \textsuperscript{\dag}Department of Mathematical Sciences, KAIST, Daejeon, South Korea
}
\date{\today}

\begin{document}

\maketitle

\begin{abstract}
We prove a Hoeffding-type moment generating function bound for weighted sums of bounded exchangeable random variables centered by their finite-population average. The bound improves the finite-population inflation factor in a recent weighted exchangeable Hoeffding inequality from logarithmic order to the rate-optimal inverse-population-size order, with an explicit constant. The proof reduces the problem to Hamming slices, identifies two-level extremizers for the relevant symmetric variational problem, and applies a hypergeometric martingale bound. We also give a lower bound showing that an inverse-population-size inflation is unavoidable.
\end{abstract}

\section{Introduction}

Concentration inequalities for sums of bounded random variables are fundamental tools in probability and statistics. The cornerstone result in this area is Hoeffding's inequality \citep{hoeffding1963probability}: if $Z_1,\ldots,Z_n$ are independent with $Z_i\in[a_i,b_i]$ almost surely and $\E [Z_i]=0$, then for every $\lambda\in\R$,
\begin{align*}
\E \bigg[\exp\biggl\{\lambda\sum_{i=1}^n Z_i\biggr\}\bigg]
\le
\exp\biggl\{\frac{\lambda^2}{8}\sum_{i=1}^n (b_i-a_i)^2\biggr\}.
\end{align*}
This sub-Gaussian moment generating function (MGF) bound implies tail inequalities of the form
\begin{align*}
\mP\biggl\{\sum_{i=1}^n Z_i\ge t\biggr\}
\le
\exp\biggl\{-\frac{2t^2}{\sum_{i=1}^n(b_i-a_i)^2}\biggr\},
\end{align*}
which underlie a vast range of applications in learning theory, survey sampling, and sequential analysis. An extension to bounded differences of martingales was established by \citet{azuma1967weighted}, and sharper tail constants for the i.i.d.\ case were subsequently obtained by \citet{bentkus2004hoeffding}.

In many settings, however, the independence assumption is too strong. A natural and practically important relaxation is exchangeability. A sequence
$(X_1,\ldots,X_N)$ is called exchangeable if its joint distribution is invariant under all permutations. Exchangeable sequences arise canonically in
sampling without replacement from a finite population \citep{serfling1974probability}, in randomization-based inference \citep{fisher1935design,lehmann2005testing}, and in conformal prediction \citep{vovk2005algorithmic,angelopoulos2024theoretical}, where exchangeability
is the key structural assumption enabling finite-sample validity guarantees. Although the de Finetti--Hewitt--Savage theorem \citep{definetti1937prevision,hewitt1955symmetric} characterizes infinite exchangeable sequences as mixtures of i.i.d. sequences, finite exchangeable
sequences need not admit such a representation \citep{diaconis1980finite}. Indeed, sampling without replacement gives exchangeable finite sequences with
strictly negative pairwise correlations, so classical independent-sequence bounds do not apply directly.

Concentration inequalities for exchangeable bounded variables have therefore attracted considerable attention. For uniformly weighted vectors, bounds
sharper than the i.i.d.\ Hoeffding bound have been established by \cite{serfling1974probability} and refined by \cite{bardenet2015concentration}.
The key observation is that, without replacement, the sum of the $n$ selected items and the sum of the remaining $N-n$ items are negatively correlated,
yielding a finite-population correction of order $(1-n/N)$. For weight vectors, i.e., for $\sum_{i=1}^n w_i X_i$ with arbitrary $w\in\R^n$, the
situation is more subtle because the partial-sum structure no longer applies. The relevant quantity is the projection of the weight vector onto the
orthogonal complement of the all-ones vector. \cite{barber2024hoeffding} recently established a Hoeffding-type MGF bound for weighted sums of exchangeable bounded variables \cite[Theorem~3.1]{barber2024hoeffding}\footnote{The theorem in \citet[Theorem~3.1]{barber2024hoeffding} is stated with $\|\widetilde w\|_2^2$. However, their proof yields the stronger variance proxy $\|\proj\widetilde w\|_2^2$ before the final relaxation $\|\proj\widetilde w\|_2^2\le \|\widetilde w\|_2^2$.} with an inflation factor $1+\eps_N$ of order $(\log N)/N$. In our centered formulation, the statistic $\sum_{i=1}^n w_i(X_i-\bar X_N)$ is invariant under adding constants to the zero-padded weight vector, so the natural variance proxy is
$\|\proj\widetilde w\|_2^2$ (notation made precise in Section~\ref{sec:main-results}). This result has been subsequently extended to tensor- and matrix-valued data by \cite{cheng2026concentration}.

Our main result identifies the correct finite-population scale for this problem. We prove a Hoeffding-type MGF bound with inflation
\begin{align*}
 \Gamma_N=1+\frac{3}{2N}+O(N^{-2}),   
\end{align*}
improving the $(\log N)/N$ inflation in the recent bound of \cite{barber2024hoeffding} to order $1/N$. We also prove that no uniform bound can avoid an inflation of order $1/N$: the optimal constant satisfies $C_N^\star\ge 1+N^{-1}+c_{N}N^{-2}$ for a bounded sequence $(c_N)_{N\ge 2}$. The improvement over Barber's bound \citep{barber2024hoeffding} comes from a different treatment of the optimization over arbitrary weights. To obtain a symmetric bound for arbitrary weights, \citet{barber2024hoeffding} starts from an order-dependent martingale MGF bound for exchangeable sampling, uses a pseudoinverse construction to express the weighted sum in the required sequential form, and then averages over permutations. The harmonic factor $1+\epsilon_N$ arises from the resulting permutation-averaged pseudoinverse identity. In contrast, we first use Proposition~\ref{prop:two-level} to reduce the symmetric variational problem on each Hamming slice exactly to two-level vectors, and only then apply a martingale bound to the resulting hypergeometric MGF problem.

Beyond the asymptotic improvement, Lemma~\ref{lem:gamma-comparison} shows that $\Gamma_N<1+\eps_N$ holds for every $N\ge3$, so our bound is strictly tighter than that of~\citet{barber2024hoeffding} at every finite sample size.

A complementary recent approach is developed by~\cite{gottschling2026hoeffding}, who derive Hoeffding-type bounds for exchangeable sums through extremal means of the de Finetti mixing measure. Their focus is on unweighted sums, whereas ours is on finite-population constants for weighted sums centered at $\bar{X}_N$.

The remainder of the paper is organized as follows. Section~\ref{sec:main-results} states the main theorem, corollary, and comparison results. Section~\ref{sec:proof-main} proves Theorem~\ref{thm:main}. Section~\ref{app:auxiliary} collects the auxiliary lemmas and their proofs. Section~\ref{app:proof-lower} gives the proof of Proposition~\ref{prop:lower-bound}.

\section{Main Results}\label{sec:main-results}

We now fix notation. Let $N\ge2$, and write $[N]=\{1,\ldots,N\}$. For $n\le N$ and $w\in\R^n$, define the zero-padded vector $\widetilde w=(w_1,\ldots,w_n,0,\ldots,0)\in\R^N$, and let $\one=(1,\ldots,1)^\top\in\R^N$. Define
\begin{align*}
\proj z=z-\frac{1}{N}(\one^\top z)\one
\end{align*}
to be the orthogonal projection onto the hyperplane $\one^\perp=\{z\in\R^N:\sum_{i=1}^N z_i=0\}$.

\begin{theorem}\label{thm:main}
Let $X_1,\ldots,X_N\in[-1,1]$ be exchangeable and let $\bar X_N=N^{-1}\sum_{i=1}^N X_i$. Then for all $n\le N$, all $w\in\R^n$, and all $\lambda\in\R$,
\begin{align*}
\E\bigg[\exp\biggl\{\lambda\sum_{i=1}^n w_i(X_i-\bar X_N)\biggr\}\bigg]
\le
\exp\biggl\{\frac{\lambda^2}{2}\Gamma_N\|\proj\widetilde w\|_2^2\biggr\},
\end{align*}
where
\begin{align*}
\Gamma_N
=
\max_{1\le s\le \lfloor N/2\rfloor}
\frac{N(N-s)}{s}
\sum_{\ell=N-s}^{N-1}\frac{1}{\ell^2}
=
\begin{cases}
\displaystyle
N\sum_{\ell=N/2}^{N-1}\ell^{-2}, & N \text{ is even},\\[1.2em]
\displaystyle
\frac{N(N+1)}{N-1}\sum_{\ell=(N+1)/2}^{N-1}\ell^{-2}, & N \text{ is odd}.
\end{cases}
\end{align*}
Moreover, $\Gamma_N=1+3/(2N)+O(N^{-2})$.
\end{theorem}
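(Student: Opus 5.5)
We will prove the theorem by reducing it to an extremal problem on Hamming slices, following the route the introduction describes.

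\textbf{Reduction to sampling without replacement.} Write $v=\proj\widetilde w$. Since $\sum_i w_i(X_i-\bar X_N)=\langle \widetilde w, X-\bar X_N\one\rangle=\langle v,X\rangle$, the statistic depends only on $v\in\one^\perp$. Conditional on the multiset $\{X_1,\dots,X_N\}$, exchangeability makes $X$ a uniformly random permutation of fixed values $x\in[-1,1]^N$. It therefore suffices to bound $\E_\pi \exp\{\lambda\langle v,x_\pi\rangle\}$ uniformly over $x\in[-1,1]^N$. The function $x\mapsto \E_\pi e^{\lambda\langle v,x_\pi\rangle}$ is convex in $x$, so its maximum over the cube is attained at vertices $x\in\{-1,1\}^N$. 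By symmetry, such a vertex is determined by the number $s$ of coordinates equal to $+1$, which is the Hamming slice. Using $\langle v,\one\rangle=0$, we may shift $x$ to $2\cdot\one_S$ with $|S|=s$. The problem becomes
\begin{align*}
\sup_{v\in\one^\perp,\,\|v\|_2=1}\ \E\exp\Bigl\{2\lambda\sum_{i\in S}v_i\Bigr\},\qquad S \text{ uniform of size } s.
\end{align*}
Replacing $s$ by $N-s$ flips the sign of the sum, so we may assume $s\le\lfloor N/2\rfloor$.

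\textbf{Two-level extremizers.} Next we invoke Proposition~\ref{prop:two-level}. It says that on each slice the symmetric variational problem is extremized by two-level vectors: $v$ takes value $a$ on $k$ coordinates and $b$ on $N-k$ coordinates, with $ka+(N-k)b=0$. For such $v$, $\sum_{i\in S}v_i=(a-b)H-sb$, where $H\sim\Hyp(N,k,s)$. We then need an MGF bound for a centered hypergeometric variable, and the bound must be uniform in $k$ once $\|v\|^2=k a^2+(N-k)b^2$ is fixed.

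\textbf{Hypergeometric martingale bound.} Reveal the $s$ draws sequentially. The Doob martingale $M_j=\E[H\mid \text{first } j \text{ draws}]$ has increments confined to intervals. When $N-j'$ items remain, these intervals have length $(N-s)/(N-j')$ times the indicator range. Applying conditional Hoeffding step by step gives
\begin{align*}
\log\E e^{t(H-\E H)}\le \frac{t^2}{8}(N-s)^2\sum_{\ell=N-s}^{N-1}\ell^{-2}.
\end{align*}
Set $t=2\lambda(a-b)$. We have $(a-b)^2=\|v\|^2 N/(k(N-k))$, and by the symmetry of $H$ in $k$ versus $s$ we may take the less favourable role. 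Combining these should produce the factor $\frac{N(N-s)}{s}\sum_{\ell=N-s}^{N-1}\ell^{-2}\,\|v\|^2$, after choosing the worst $k$ and suitably symmetrizing the roles of $k$ and $s$. Maximizing over $s$ then gives $\Gamma_N$.

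\textbf{Main obstacle.} The delicate point is making the constants match exactly, because the $k$-dependence must cancel against $s$. If it does not, we would apply the martingale bound to whichever of the two hypergeometric descriptions, sampling $s$ from $k$ or $k$ from $s$, is tighter.

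\textbf{Closed form and asymptotics.} We still need to show that the maximum over $s$ is attained at $s=\lfloor N/2\rfloor$. For this we compare consecutive values of $f(s)=\frac{N-s}{s}\sum_{\ell=N-s}^{N-1}\ell^{-2}$. The sum behaves like $\frac{s}{N(N-s)}$ plus corrections, so $f$ is increasing in $s$; this can be checked via a ratio or difference inequality. The asymptotic expansion follows from Euler--Maclaurin. For even $N$,
\begin{align*}
\sum_{\ell=N/2}^{N-1}\ell^{-2}=\frac{1}{N}+\frac{3}{2N^2}+O(N^{-3}),
\end{align*}
which gives $\Gamma_N=1+3/(2N)+O(N^{-2})$; the odd case is analogous.
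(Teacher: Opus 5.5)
\textbf{Gap: the martingale bound is applied with the roles reversed.} Your outline matches the paper's: symmetrize, reduce to a Hamming slice, invoke Proposition~\ref{prop:two-level}, and finish with the sampling-without-replacement martingale. But the step that actually produces $\Gamma_N$ is set up with the roles reversed, and as written it fails. You run the martingale with the slice size $s$ as the number of draws. That gives
\[
\log\E\exp\Bigl\{2\lambda\sum_{i\in S}v_i\Bigr\}\le\frac{\lambda^2}{2}\|v\|_2^2\,\frac{N}{k(N-k)}(N-s)^2\sum_{\ell=N-s}^{N-1}\ell^{-2}.
\]
Here the $k$-dependence does not cancel against $s$. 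At $k=1$ and $s=\lfloor N/2\rfloor$ the factor is roughly $N/4$, so ``choosing the worst $k$'' ruins the bound. Likewise, the expression $\frac{N(N-s)}{s}\sum_{\ell=N-s}^{N-1}\ell^{-2}$ with $s$ the slice size is not what this computation yields.

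The missing observation is that the martingale bound depends only on the number of draws, not on how many population items are marked; your increment ranges $(N-s)/(N-j)$ never involve the marked count. Since $H=|S\cap A|$ is symmetric in the two sets, read it as drawing the $k=|A|$ positions of one level from a population in which the $s$ positions of $S$ are marked. Swap the two levels if necessary so that $k\le N/2$. Then, using $(a-b)^2=\|v\|_2^2N/\{k(N-k)\}$,
\[
(a-b)^2(N-k)^2\sum_{\ell=N-k}^{N-1}\ell^{-2}=\|v\|_2^2\,\frac{N(N-k)}{k}\sum_{\ell=N-k}^{N-1}\ell^{-2}\le\Gamma_N\|v\|_2^2
\]
holds exactly and uniformly in the slice size. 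So the maximization defining $\Gamma_N$ runs over the level multiplicity, not over the slice. This is exactly how the paper applies Lemma~\ref{lem:hypergeom}: the number of draws is the level multiplicity, and the slice size plays the role of the marked count.

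Your contingency sentence points in this direction, and taking the better of the two descriptions would also suffice. However, it has to be the argument rather than a fallback, and it is the \emph{favourable} role you want, not the less favourable one.

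\textbf{Minor point: monotonicity is only asserted.} You claim $f(s)$ is increasing without proof. Since the leading term of $f$ is the constant $1/N$, the monotonicity lives entirely in the correction terms. The paper establishes it via the strict midpoint bound $\sum_{\ell=r}^{N-1}\ell^{-2}<(N-r)/\{(r-\tfrac12)(N-\tfrac12)\}$ together with $(r-\tfrac12)(N-\tfrac12)>N(r-1)$.
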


The proof of Theorem~\ref{thm:main}, including the derivation of the closed form and asymptotic expansion of $\Gamma_N$, is given in Section~\ref{sec:proof-main}.

Optimizing $\lambda$ in Theorem~\ref{thm:main} via Markov's inequality yields the following one-sided tail bound.

\begin{corollary}\label{cor:tail}
Under the assumptions of Theorem~\ref{thm:main}, if $\proj\widetilde w\ne0$, then for any $\delta\in(0,1)$,
\begin{align*}
\mP\biggl\{
\sum_{i=1}^n w_i(X_i-\bar X_N)
\ge
\|\proj\widetilde w\|_2\sqrt{2\Gamma_N\log(1/\delta)}
\biggr\}
\le \delta.
\end{align*}
\end{corollary}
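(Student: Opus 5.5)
The corollary follows from Theorem~\ref{thm:main} by the standard Chernoff argument.

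Write $S=\sum_{i=1}^n w_i(X_i-\bar X_N)$ and $\sigma^2=\Gamma_N\|\proj\widetilde w\|_2^2$. Since $\proj\widetilde w\neq 0$ and $\Gamma_N>0$, we have $\sigma^2>0$. Each summand in the definition of $\Gamma_N$ is positive, so in fact $\Gamma_N\ge 1$ is not even needed; positivity suffices.

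For any $t>0$ and $\lambda>0$, Markov's inequality applied to $e^{\lambda S}$, together with Theorem~\ref{thm:main}, gives
\begin{align*}
\mP\{S\ge t\}\le e^{-\lambda t}\,\E\big[e^{\lambda S}\big]\le \exp\Bigl\{-\lambda t+\tfrac{\lambda^2}{2}\sigma^2\Bigr\}.
\end{align*}

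Choose $\lambda=t/\sigma^2$, which is positive. The exponent then equals $-t^2/(2\sigma^2)$, so
\begin{align*}
\mP\{S\ge t\}\le \exp\{-t^2/(2\sigma^2)\}.
\end{align*}

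Now set $t=\sigma\sqrt{2\log(1/\delta)}=\|\proj\widetilde w\|_2\sqrt{2\Gamma_N\log(1/\delta)}$. This is positive because $\delta\in(0,1)$, so the bound above applies. Substituting gives $t^2/(2\sigma^2)=\log(1/\delta)$, and hence $\mP\{S\ge t\}\le\delta$, which is the claim.

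There is no real obstacle. The only points to check are that $t>0$ and $\sigma>0$, which make the choice of $\lambda$ legitimate. Both follow from the hypotheses $\proj\widetilde w\ne0$ and $\delta<1$.
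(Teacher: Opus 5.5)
Your proof is correct and follows the same route as the paper, which obtains the corollary by applying Markov's inequality to $e^{\lambda S}$, invoking the MGF bound of Theorem~\ref{thm:main}, and optimizing over $\lambda$. Your choices $\lambda=t/\sigma^2$ and $t=\sigma\sqrt{2\log(1/\delta)}$ are the standard optimizing ones, and your positivity checks on $\sigma$ and $t$ are exactly what is needed.
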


\paragraph{Comparison with \citet[Theorem~3.1]{barber2024hoeffding}.} Their result gives a Hoeffding-type bound with inflation factor $1+\eps_N$,
where
\begin{align*}
\eps_N=\frac{H_N-1}{N-H_N},
\qquad
H_N=\sum_{j=1}^N \frac{1}{j}.
\end{align*}
Since
\begin{align*}
1+\eps_N
=
1+\frac{\log N+\gamma_{\mathrm{EM}}-1}{N}
+
O\Bigl(\frac{(\log N)^2}{N^2}\Bigr),
\end{align*}
where $\gamma_{\mathrm{EM}}\approx0.5772$ is the Euler--Mascheroni constant, their inflation is of order $(\log N)/N$, whereas Theorem~\ref{thm:main}
achieves $\Gamma_N=1+3/(2N)+O(N^{-2})$. Moreover, Lemma~\ref{lem:gamma-comparison} shows that $\Gamma_N < 1+\eps_N$ for every $N\ge3$, with equality at $N=2$.

\paragraph{Rate Optimality.}
Let $C_N^\star$ denote the smallest constant for which $\Gamma_N$ can be replaced in Theorem~\ref{thm:main} uniformly over all exchangeable $X_1,\ldots,X_N\in[-1,1]$, all weights, and all $\lambda\in\R$.

\begin{proposition}\label{prop:lower-bound}
\begin{align*}
C_N^\star
\ge
\begin{cases}
\displaystyle \frac{N}{N-1}, & N \text{ even}, \\[0.8em]
\displaystyle \frac{N+1}{N}, & N \text{ odd}.
\end{cases}
\end{align*}
In particular, $C_N^\star\ge 1+N^{-1}+c_{N}N^{-2}$ for a bounded sequence $(c_N)_{N\ge 2}$, where $c_N=N/(N-1)$ for even $N$ and $c_N=0$ for odd $N$. Thus the first-order coefficient $3/2$ in Theorem~\ref{thm:main} can at best be reduced to $1$.
\end{proposition}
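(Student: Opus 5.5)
We will prove Proposition~\ref{prop:lower-bound} by building one explicit exchangeable distribution and one weight vector, and then comparing second-order Taylor coefficients in $\lambda$ at $\lambda=0$. Suppose the MGF bound holds with constant $C$ for all $\lambda$. Both sides equal $1$ at $\lambda=0$, and the left side has zero first derivative there, since $\E[X_i-\bar X_N]=0$ by exchangeability. Expanding both sides to order $\lambda^2$ then forces
\begin{align*}
\Var\Bigl(\sum_{i=1}^n w_i(X_i-\bar X_N)\Bigr)\le C\|\proj\widetilde w\|_2^2 .
\end{align*}
So it suffices to exhibit a configuration with variance-to-proxy ratio equal to $N/(N-1)$ for even $N$ and $(N+1)/N$ for odd $N$.

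\textbf{Construction.} Take $n=N$, so $\widetilde w=w$. Let $X=(X_1,\ldots,X_N)$ be a uniformly random permutation of a fixed vector $x\in[-1,1]^N$. Then $\bar X_N=\bar x$ is deterministic. Taking $w\in\one^\perp$ gives $\sum_i w_i(X_i-\bar X_N)=\sum_i w_iX_i$. The covariance of a random permutation is
\begin{align*}
\Cov(X_i,X_j)=\sigma^2\Bigl(\mathbf 1\{i=j\}-\frac{1-\mathbf 1\{i=j\}}{N-1}\Bigr),\qquad \sigma^2=\frac1N\sum_k (x_k-\bar x)^2 .
\end{align*}
For $w\perp\one$ this yields
\begin{align*}
\Var\Bigl(\sum_i w_iX_i\Bigr)=\sigma^2\|w\|^2\Bigl(1+\frac1{N-1}\Bigr)=\sigma^2\|w\|^2\frac{N}{N-1}.
\end{align*}
The ratio to $\|\proj w\|^2=\|w\|^2$ is therefore $\sigma^2 N/(N-1)$. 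We now maximize $\sigma^2$ over $x\in[-1,1]^N$.

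\textbf{Even $N$.} Take half the entries equal to $+1$ and half equal to $-1$. This gives $\sigma^2=1$, and the ratio is $N/(N-1)$. We then have
\begin{align*}
\frac{N}{N-1}=1+\frac1N+\frac{1}{N(N-1)},
\end{align*}
so $c_N=N/(N-1)$.

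\textbf{Odd $N$.} Take $(N+1)/2$ entries equal to $+1$ and $(N-1)/2$ entries equal to $-1$. Then $\bar x=1/N$ and $\sigma^2=1-\bar x^2=(N^2-1)/N^2$. The ratio is
\begin{align*}
\frac{N^2-1}{N^2}\cdot\frac{N}{N-1}=\frac{N+1}{N},
\end{align*}
so $c_N=0$.

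The step needing the most care is the Taylor comparison, which must be stated rigorously. Both sides are analytic in $\lambda$ because the variables are bounded. Writing the left side as $1+\lambda^2 V/2+o(\lambda^2)$ and the right side as $1+\lambda^2 C\|\proj w\|^2/2+o(\lambda^2)$, we divide by $\lambda^2$ and let $\lambda\to0$. Since $C_N^\star$ is the smallest admissible constant, this gives $C_N^\star\ge V/\|\proj w\|^2$. The remaining algebra is routine; any nonzero $w\perp\one$ works, for instance $w=e_1-e_2$.
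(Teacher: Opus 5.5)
Your proposal is correct and follows essentially the same route as the paper. The paper also takes $X$ uniform on a Hamming slice (your uniformly permuted $\pm1$ vector with $\lceil N/2\rceil$ entries equal to $+1$ is exactly this). It compares second-order terms as $\lambda\to0$ and obtains $\Var(a^\top X)=4k(N-k)\|a\|_2^2/[N(N-1)]$, which matches your $\sigma^2 N/(N-1)$ because $\sigma^2=4k(N-k)/N^2$ for such a vector; the only cosmetic difference is that you use the general covariance structure of a random permutation rather than hypergeometric indicator covariances.
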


Together, Theorem~\ref{thm:main} and Proposition~\ref{prop:lower-bound} show that the inflation $\Gamma_N=1+O(N^{-1})$ is rate-optimal. The bounds also show that any first-order coefficient, if it exists, must lie between $1$ and $3/2$. Whether such a coefficient exists and, if so, its value is left for future work. Proposition~\ref{prop:lower-bound} uses only the second-order behavior of the log-MGF as $\lambda\to0$, yielding a variance-level necessary condition. Therefore, closing the remaining gap may require genuinely nonlocal control of the MGF.

The rate-optimality concerns the full class of arbitrary signed weights. This does not contradict the sharper bounds available for structured weight classes. For example, \citet[Section~3.1.1]{barber2024hoeffding} shows that the inflation can be avoided for nonnegative weights, while unweighted sums admit finite-population corrections~\citep{serfling1974probability,bardenet2015concentration}. The proof of Proposition~\ref{prop:lower-bound} is given in Section~\ref{app:proof-lower}.

\begin{remark}\label{remark}
The optimal constant $C_N^\star$ itself admits an exact characterization in terms of hypergeometric MGFs. Let $H_{N,k,m}\sim\Hyp(N,k,m)$ and define
\begin{align*}
\mathcal V_N
:=
\underbrace{
\max_{\substack{1\le k\le N-1\\1\le m\le N-1}}
\frac{8N}{m(N-m)}
\sup_{t\ne0}
\frac{\psi_{N,k,m}(t)}{t^2}
}_{\text{hypergeometric optimization}},
\end{align*}
where $\psi_{N,k,m}(t)$ is the log-MGF of the centered hypergeometric random variable:
\begin{align*}
\psi_{N,k,m}(t)
=
\log\E\biggl[
\exp\biggl\{
t\biggl(H_{N,k,m}-\frac{km}{N}\biggr)
\biggr\}
\biggr].
\end{align*}
Then $C_{N}^{\star}=\mathcal{V}_N$. This identity follows from the exact reductions used in the proof of Theorem~\ref{thm:main} and is verified immediately after that proof, where the roles of $k$, $m$, and $t$ are made explicit. This characterization provides a concrete formulation for studying the remaining gap in the first-order constant.
\end{remark}

\begin{remark}
Our proof is specific to the scalar Hoeffding setting. It is therefore natural to ask whether the same structural reduction can be extended to other concentration inequalities. We briefly comment on several directions below.
\begin{itemize}
  \item \emph{Tensor and matrix bounds.} For a mode of size $M$, the Hoeffding bound of \citet[Theorem~1]{cheng2026concentration} for mode-exchangeable tensors, as defined in \citet[Definition~2.1]{cheng2026concentration}, contains the factor $1+\epsilon_M$. Because their proof proceeds conditionally, one mode at a time, the scalar improvement developed here may apply whenever the corresponding conditional increment can be expressed as a weighted sum of exchangeable scalar variables. Under general tensor weights, however, different mode slices are coupled with different weight slices, so it is unclear whether such modewise improvements can be combined into a sharper bound for the full tensor. For matrix-valued data, the relevant MGF arguments involve trace exponentials, and the scalar symmetric structure underlying our two-level reduction has no immediate analogue in the noncommutative setting. Thus, neither extension follows directly from our argument, and both appear to require additional structural ideas.
  \item \emph{Bernstein bounds.} Bernstein-type inequalities present a different difficulty. In the Hoeffding setting considered here, the reduction is carried out while preserving the centering constraint and the $\ell_2$ norm, which is sufficient for the resulting MGF bound. Bernstein-type bounds, by contrast, also depend on range and conditional or empirical variance terms. The coordinate perturbations used in our two-level reduction need not preserve these additional quantities, so the same reduction cannot be applied directly. Extending the argument to the Bernstein setting would therefore require a different structural reduction that preserves, or otherwise controls, the relevant range and variance terms together with the $\ell_2$ norm. The feasibility of such a reduction and its potential to yield a sharper finite-population factor remain open questions.
\end{itemize}
\end{remark}

\section{Proof of Theorem~\ref{thm:main}}\label{sec:proof-main}
\begin{proof}
The proof proceeds in three steps. First, we symmetrize over permutations to reduce the problem to the Hamming-slice inequality~\eqref{eq:slice}. Second, Proposition~\ref{prop:two-level} reduces the optimization on each slice to two-level vectors, for which the relevant MGF becomes that of a centered hypergeometric random variable. Finally, we apply the hypergeometric MGF bound of Lemma~\ref{lem:hypergeom}, obtained from a sampling-without-replacement martingale, to complete the proof.

\paragraph{Reduction to Hamming slices.}

Set $a=\proj\widetilde w=\widetilde w-\frac{1}{N}(\one^\top\widetilde w)\one$. Then $\sum_{i=1}^n w_i(X_i-\bar X_N)=a^\top X$ and $\sum_{i=1}^N a_i=0$, so
$a\in\one^\perp$. It therefore suffices to establish
\begin{align*}
\E \big[e^{\lambda a^\top X}\big]
\le
\exp\Bigl\{\frac{\lambda^2}{2}\Gamma_N\|a\|_2^2\Bigr\} \quad \text{for all $a\in\one^\perp$.}
\end{align*}

Let $\SN_N$ denote the symmetric group on $[N]$, and for fixed $a\in\one^\perp$ define
\begin{align*}
\Phi(x)=\frac{1}{N!}\sum_{\pi\in\SN_N}\exp\{\lambda a^\top x_\pi\},
\qquad x\in[-1,1]^N.
\end{align*}
Since $X$ is exchangeable, $X_\pi\stackrel{d}{=}X$ for every $\pi$, so $\E e^{\lambda a^\top X}=\E\Phi(X)$. Since $\Phi$ is convex on $[-1,1]^N$, its supremum is attained at a vertex $v\in\{\pm1\}^N$, giving $\E e^{\lambda a^\top X}\le\max_{v\in\{\pm1\}^N}\Phi(v)$.

Let $P=\{j\in[N]:v_j=+1\}$ with $|P|=k$. Under a uniformly random permutation $\pi\in\SN_N$, the set $S_k=\{i\in[N]:(v_\pi)_i=+1\}$ is uniform over all $k$-element subsets of $[N]$. Since $\sum_{i=1}^N a_i=0$, we have $a^\top v_\pi=2\sum_{i\in S_k}a_i$. Substituting $y_i=2\lambda a_i$, it suffices to prove the slice inequality
\begin{align}
\log \E_{S_k}\bigg[\exp\biggl\{\sum_{i\in S_k}y_i\biggr\}\bigg]
\le
\frac{\Gamma_N}{8}\|y\|_2^2,
\qquad
\sum_{i=1}^N y_i=0,
\label{eq:slice}
\end{align}
for each $1\le k\le N-1$ (the boundary cases $k=0,N$ are trivial).

\paragraph{Two-level reduction on the slice.}

Fix $1\le k\le N-1$. Since $S_k$ is uniformly distributed over all $k$-element subsets of $[N]$, we have
\begin{align*}
\E_{S_k}\bigg[\exp\biggl\{\sum_{i\in S_k}y_i\biggr\}\bigg]
=
\frac{e_k(e^{y_1},\ldots,e^{y_N})}{\binom{N}{k}},
\end{align*}
where $e_k(z_1,\ldots,z_N)=\sum_{|S|=k}\prod_{i\in S}z_i$ denotes the $k$th elementary symmetric polynomial. Define
\begin{align*}
F_{N,k}(y)=\log\biggl\{\frac{e_k(e^{y_1},\ldots,e^{y_N})}{\binom{N}{k}}\biggr\}.
\end{align*}
By Proposition~\ref{prop:two-level} (proved in Section~\ref{app:auxiliary}), the maximum of $F_{N,k}$ over the sphere section $\{y\in\R^N:\sum_{i=1}^N y_i=0,\ \|y\|_2=\rho\}$ is attained by a vector with at most two distinct coordinate values. It therefore suffices to prove \eqref{eq:slice} for vectors of the form
\begin{align*}
y_i=
\begin{cases}
\alpha, & i\in A,\\
\beta, & i\notin A,
\end{cases}
\qquad
1\le m:=|A|\le N-1,
\qquad
m\alpha+(N-m)\beta=0.
\end{align*}
Set $d=\alpha-\beta$. Then
\begin{align}
\alpha=\frac{N-m}{N}d,
\qquad
\beta=-\frac{m}{N}d,
\qquad
d^2=\rho^2\frac{N}{m(N-m)}.
\label{eq:d-value}
\end{align}
Moreover,
\begin{align*}
\sum_{i\in S_k}y_i
=
d\Big(|S_k\cap A|-\frac{km}{N}\Big).
\end{align*}
The problem therefore reduces to bounding the MGF of
\begin{align*}
H=|S_k\cap A|\sim\mathrm{Hypergeometric}(N,k,m),
\qquad
\E[H]=\frac{km}{N}.
\end{align*}

Applying Lemma~\ref{lem:hypergeom} with $\mu=k/N$ and $t=d$, and writing $s=m\wedge(N-m)$ so that $m(N-m)=s(N-s)$, we obtain
\begin{align*}
\log \E_{S_k}\bigg[\exp\biggl\{\sum_{i\in S_k}y_i\biggr\}\bigg]
&\le
\frac{d^2}{8}B_{N,m} \\
&=
\frac{\rho^2}{8}
\frac{N(N-s)}{s}
\sum_{\ell=N-s}^{N-1}\frac{1}{\ell^2} \\
&\le
\frac{\Gamma_N}{8}\|y\|_2^2,
\end{align*}
where the last inequality follows from the definition of $\Gamma_N$. This proves \eqref{eq:slice}. Returning to $y=2\lambda a$ completes the MGF bound.

\paragraph{Closed form and asymptotics of $\Gamma_N$.}\label{app:gamma}

We now evaluate the maximum in the definition of $\Gamma_N$ in Theorem~\ref{thm:main}. The case $N=2$ is immediate: the only value is $s=1$, giving $\Gamma_2=2$. It remains to treat $N\ge3$.

Substituting $r=N-s$, the range $1\le s\le\lfloor N/2\rfloor$ becomes $\lceil N/2\rceil\le r\le N-1$. Since $N$ is constant with respect to $r$, the quantity to be maximized is, up to the constant factor $N$,
\begin{align*}
g_r=
\frac{r}{N-r}\sum_{\ell=r}^{N-1}\frac{1}{\ell^2}.
\end{align*}
We claim that $g_r$ is strictly decreasing in $r$. Write $S_r=\sum_{\ell=r}^{N-1}\ell^{-2}$. A direct calculation yields
\begin{align*}
g_{r-1}-g_r
=
\frac{1}{(r-1)(N-r+1)}
-
\frac{N S_r}{(N-r)(N-r+1)}.
\end{align*}
Thus $g_{r-1}\ge g_r$ holds provided
\begin{align*}
S_r\le \frac{N-r}{N(r-1)}.
\end{align*}
Since $x\mapsto x^{-2}$ is strictly convex, the midpoint rule gives a strict inequality:
\begin{align*}
S_r
<
\int_{r-1/2}^{N-1/2}\frac{dx}{x^2}
=
\frac{N-r}{(r-1/2)(N-1/2)}.
\end{align*}
The inequality $(r-1/2)(N-1/2)\ge N(r-1)$ holds because
\begin{align*}
(r-1/2)(N-1/2)-N(r-1)
=
\frac{1}{2}(N-r)+\frac{1}{4}>0.
\end{align*}
Therefore,
\begin{align*}
S_r
\le
\frac{N-r}{N(r-1)}.
\end{align*}
Consequently, $g_r$ is strictly decreasing, the maximum is attained at $s=\lfloor N/2\rfloor$, and the closed form stated in Theorem~\ref{thm:main} follows.

\medskip

\noindent\emph{Even case.}
Let $N=2M$. Then
\begin{align*}
\Gamma_N
=
2M\sum_{\ell=M}^{2M-1}\ell^{-2}.
\end{align*}
By the Euler--Maclaurin formula,
\begin{align*}
\sum_{\ell=M}^{2M-1}\ell^{-2}
=
\frac{1}{2M}
+
\frac{3}{8M^2}
+
O(M^{-3}).
\end{align*}
Therefore,
\begin{align*}
\Gamma_N
&=
2M\Big(
\frac{1}{2M}
+
\frac{3}{8M^2}
+
O(M^{-3})
\Big) \\
&=
1+\frac{3}{4M}+O(M^{-2})
=
1+\frac{3}{2N}+O(N^{-2}).
\end{align*}

\medskip

\noindent\emph{Odd case.}
Let $N=2M+1$. Then
\begin{align*}
\Gamma_N
=
\frac{(2M+1)(2M+2)}{2M}
\sum_{\ell=M+1}^{2M}\ell^{-2}.
\end{align*}
By the Euler--Maclaurin formula,
\begin{align*}
\sum_{\ell=M+1}^{2M}\ell^{-2}
=
\frac{1}{2M}
-
\frac{3}{8M^2}
+
O(M^{-3}).
\end{align*}
Since
\begin{align*}
\frac{(2M+1)(2M+2)}{2M}
=
2M+3+\frac{1}{M},
\end{align*}
we obtain
\begin{align*}
\Gamma_N
&=
\Big(
2M+3+\frac{1}{M}
\Big)
\Big(
\frac{1}{2M}
-
\frac{3}{8M^2}
+
O(M^{-3})
\Big) \\
&=
1+\frac{3}{4M}+O(M^{-2}).
\end{align*}
Finally, since
\begin{align*}
\frac{1}{M}
=
\frac{2}{N-1}
=
\frac{2}{N}+O(N^{-2}),
\end{align*}
we conclude that
\begin{align*}
\Gamma_N
=
1+\frac{3}{2N}+O(N^{-2}),
\end{align*}
which agrees with the even case.

\medskip

This completes the proof of Theorem~\ref{thm:main}.
\end{proof}

\paragraph{Verification of Remark~\ref{remark}.}
We verify that $C_N^\star=\mathcal V_N$. By the Hamming-slice and two-level reductions, any admissible normalized log-MGF is bounded by that of a two-level vector on some Hamming slice. More precisely, let $k$ denote the slice size, let the two weight levels have multiplicities $m$ and $N-m$, and let $t$ denote their signed difference. For a uniformly random $k$-subset, the corresponding sum depends only on the number of selected coordinates from the level of multiplicity $m$, which has distribution $\Hyp(N,k,m)$. The calculation above then gives
\begin{align*}
F_{N,k}(y)=\psi_{N,k,m}(t),
\qquad
\|y\|_2^2=\frac{m(N-m)}{N}t^2,
\end{align*}
and hence
\begin{align*}
\frac{8F_{N,k}(y)}{\|y\|_2^2}
=
\frac{8N}{m(N-m)}
\frac{\psi_{N,k,m}(t)}{t^2}
\le \mathcal V_N.
\end{align*}
Thus $C_N^\star\le\mathcal V_N$.

Conversely, fix $1\le k,m\le N-1$ and $t\ne0$, let $X$ be uniform on the $k$th Hamming slice, and take $a\in\one^\perp$ equal to $(N-m)t/(2N)$ on a set $A$ of size $m$ and $-mt/(2N)$ otherwise. Then
\begin{align*}
a^\top X
=
t\Bigl(|S_k\cap A|-\frac{km}{N}\Bigr),
\qquad
\|a\|_2^2=\frac{m(N-m)}{4N}t^2,
\end{align*}
where $|S_k\cap A|\sim\Hyp(N,k,m)$. The defining bound for $C_N^\star$, with $\lambda=1$, gives
\begin{align*}
\frac{8N}{m(N-m)}
\frac{\psi_{N,k,m}(t)}{t^2}
\le C_N^\star.
\end{align*}
Taking the supremum over $t\ne0$ and the maximum over $k,m$ yields $\mathcal V_N\le C_N^\star$, completing the verification.

\section{Auxiliary Results and Proofs}\label{app:auxiliary}

The proof of Theorem~\ref{thm:main} uses several auxiliary results whose proofs were deferred. We collect them here in the order in which they are invoked.

We begin with the classical Hoeffding lemma~\citep{hoeffding1963probability}, which is used in the proof of Lemma~\ref{lem:hypergeom}.
\begin{lemma}\label{lem:hoeffding}
Let $Z$ be a real random variable with $\E Z=0$ and $Z\in[a,b]$ a.s. Then for every $\theta\in\R$,
\begin{align*}
\E \big[e^{\theta Z}\big]
\le
\exp\biggl\{\frac{\theta^2(b-a)^2}{8}\biggr\}.
\end{align*}
\end{lemma}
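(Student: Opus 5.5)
The statement to prove is Hoeffding's lemma (Lemma~\ref{lem:hoeffding}). I would follow the classical convexity-plus-Taylor route. The case $\theta=0$ is trivial. If $a=b$, then $Z=0$ almost surely and there is nothing to prove. So assume $a<b$. Since $\E Z=0$ and $Z\in[a,b]$, we have $a\le 0\le b$. If $a=0$ or $b=0$, then again $Z=0$ a.s., so we may assume $a<0<b$.

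\textbf{Step 1 (convexity).} Because $z\mapsto e^{\theta z}$ is convex, on $[a,b]$ it lies below the chord:
\begin{align*}
e^{\theta z}\le \frac{b-z}{b-a}e^{\theta a}+\frac{z-a}{b-a}e^{\theta b}.
\end{align*}
Take expectations and use $\E Z=0$. This gives $\E e^{\theta Z}\le (1-p)e^{\theta a}+pe^{\theta b}$, where $p=-a/(b-a)\in(0,1)$. Now set $u=\theta(b-a)$. The right-hand side can then be written as $e^{L(u)}$ with
\begin{align*}
L(u)=-pu+\log\bigl(1-p+pe^{u}\bigr).
\end{align*}

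\textbf{Step 2 (Taylor bound on $L$).} Direct computation gives $L(0)=0$ and $L'(0)=0$. For the second derivative,
\begin{align*}
L''(u)=q(1-q), \qquad q=\frac{pe^{u}}{1-p+pe^{u}}\in(0,1),
\end{align*}
so $L''(u)\le 1/4$ for every $u$. By Taylor's theorem with Lagrange remainder, $L(u)\le u^2/8$. Substituting back $u=\theta(b-a)$ yields $\E e^{\theta Z}\le \exp\{\theta^2(b-a)^2/8\}$.

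The only step that needs care is the computation of $L''$ and recognizing it as a Bernoulli variance $q(1-q)$. Once that is in hand, the bound $1/4$ is immediate and the rest is routine.
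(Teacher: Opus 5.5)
Your argument is correct: the chord bound, the reparametrization $u=\theta(b-a)$ with $L(u)=-pu+\log(1-p+pe^{u})$, and the estimate $L''(u)=q(1-q)\le 1/4$ all check out, and your reduction to $a<0<b$ handles the degenerate cases properly. The paper gives no proof of its own; it cites this lemma as the classical result of Hoeffding (1963), and your convexity-plus-Taylor argument is essentially Hoeffding's original proof.
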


The following lemma is a technical sign result used in the proof of Lemma~\ref{lem:three-point}. It shows that a certain weighted sum involving Lagrange basis polynomials is strictly positive, a fact invoked via the second-order optimality condition.

\begin{lemma}\label{lem:algebraic}
Let $I\subset\R$ be an interval and $h\in C^5(I)$. Let $x_1,x_2,x_3\in I$ be distinct and set $p(t)=\prod_{i=1}^3(t-x_i)$. If $h(x_i)=0$ for $i=1,2,3$ and $h^{(5)}(t)>0$ on $I$, then $\sum_{i=1}^3 h'(x_i)/p'(x_i)^2>0$.
\end{lemma}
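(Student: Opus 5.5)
The natural route is through the error formula for polynomial interpolation combined with divided differences. The key observation is that the sum $\sum_{i=1}^3 h'(x_i)/p'(x_i)^2$ is a divided difference of the function $h'/p'$ restricted to the nodes. More useful, though, is the identity $\sum_i f(x_i)/p'(x_i)=f[x_1,x_2,x_3]$ for any $f$. We apply it with $f=h'/p'$, or more cleanly with a function built from $h$ that vanishes to second order.

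Since $h(x_i)=0$, we can write $h(t)=p(t)q(t)$, where $q$ is $C^2$ and given by the divided-difference formula $q(t)=h[x_1,x_2,x_3,t]$. Then $h'(x_i)=p'(x_i)q(x_i)$, so
\begin{align*}
\sum_{i=1}^3\frac{h'(x_i)}{p'(x_i)^2}=\sum_{i=1}^3\frac{q(x_i)}{p'(x_i)}=q[x_1,x_2,x_3].
\end{align*}

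Next, write $q(t)=h[x_1,x_2,x_3,t]$. By the recursive definition and symmetry of divided differences, the second divided difference of $q$ on the nodes is a divided difference of $h$ with repeated nodes:
\begin{align*}
q[x_1,x_2,x_3]=h[x_1,x_2,x_3,x_1,x_2,x_3]=h[x_1,x_1,x_2,x_2,x_3,x_3].
\end{align*}
This is a confluent divided difference of order $5$, which is well defined because $h\in C^5$. The Hermite–Genocchi formula expresses it as
\begin{align*}
\frac{1}{5!}\,h^{(5)}(\xi)
\end{align*}
for some $\xi$ in the convex hull of the nodes, which lies in $I$. Equivalently, it is an average of $h^{(5)}/5!$ against the B-spline density, which is a positive weight. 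Since $h^{(5)}>0$ on $I$, the sum is strictly positive.

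The main point to justify carefully is the middle identity. One must check that $q$, defined via $h(t)=p(t)q(t)$, coincides with $t\mapsto h[x_1,x_2,x_3,t]$; this follows from the Newton form of interpolation, since the quadratic interpolant of $h$ at the nodes is zero. One must also check that taking the second divided difference of this $C^2$ function in $t$ gives the confluent sixth-node divided difference. This step uses the continuity of divided differences in the nodes (the Hermite–Genocchi integral representation makes this immediate for $h\in C^5$) together with the recursion $g[\,\cdot\,,t][y_1,y_2,y_3]=g[\,\cdot\,,y_1,y_2,y_3]$. With that in hand, the positivity of the B-spline kernel finishes the argument.
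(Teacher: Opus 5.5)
Your proof is correct. It rests on the same underlying fact as the paper's, namely that $\sum_{i=1}^3 h'(x_i)/p'(x_i)^2$ equals the confluent divided difference $h[x_1,x_1,x_2,x_2,x_3,x_3]=h^{(5)}(\xi)/5!$, but you reach that fact by a different path.

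The paper never uses divided differences. It writes down the Hermite interpolant $H(t)=\sum_i h'(x_i)(t-x_i)\ell_i(t)^2$ and checks directly that $h-H$ vanishes to second order at each node. It then applies Rolle's theorem five times to get $(h-H)^{(5)}(\xi)=0$, and reads off $H^{(5)}\equiv 5!\sum_i h'(x_i)/p'(x_i)^2$ from the leading coefficient of $H$.

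You instead factor $h=pq$ with $q(t)=h[x_1,x_2,x_3,t]$. You note that $h'(x_i)/p'(x_i)^2=q(x_i)/p'(x_i)$, so the sum is $q[x_1,x_2,x_3]$, and you finish with the concatenation rule and Hermite--Genocchi.

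Your route makes it transparent why the squared factor appears: one $p'(x_i)$ comes from differentiating $pq$ at a root of $p$, the other from the divided-difference formula. The B-spline representation would also tolerate the weaker hypothesis $h^{(5)}\ge0$ with strict positivity only on a subinterval of the hull.

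The price is dependence on the calculus of confluent divided differences: the $C^2$ regularity of $q$, and the identity $q[x_1,x_2,x_3]=h[x_1,x_2,x_3,x_1,x_2,x_3]$ when nodes repeat. These are standard, but they need the continuity-in-nodes justification you flag. The paper's argument is fully self-contained: it requires only the two-line check $H(x_i)=0$, $H'(x_i)=h'(x_i)$ plus Rolle. That Rolle step is in effect the standard proof of the mean value theorem for divided differences that you cite.
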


\begin{proof}
Define the Lagrange basis polynomials $\ell_i(t)=p(t)/[(t-x_i)p'(x_i)]$, so that $\ell_i(x_j)=\delta_{ij}$, and set $H(t)=\sum_{i=1}^3 h'(x_i)(t-x_i)\ell_i(t)^2$. By construction, $H(x_i)=h(x_i)=0$ and $H'(x_i)=h'(x_i)$ for each $i$, so $h-H$ vanishes to second order at each $x_i$. Applying Rolle's theorem five times, there exists $\xi\in(\min_i x_i,\max_i x_i)$ with $(h-H)^{(5)}(\xi)=0$.

Since $(t-x_i)\ell_i(t)^2$ is a degree-$5$ polynomial with leading coefficient $1/p'(x_i)^2$, the leading coefficient of $H$ is $\sum_{i=1}^3 h'(x_i)/p'(x_i)^2$, and therefore $H^{(5)}(t)=5!\sum_{i=1}^3 h'(x_i)/p'(x_i)^2$ for all $t$. Hence
\begin{align*}
\sum_{i=1}^3\frac{h'(x_i)}{p'(x_i)^2}
=\frac{h^{(5)}(\xi)}{5!}>0,
\end{align*}
which completes the proof.
\end{proof}

The following lemma treats the three-coordinate sphere section that arises in the proof of Proposition~\ref{prop:two-level}: it shows that the relevant symmetric exponential objective is maximized at a point where at least two coordinates coincide.

\begin{lemma}\label{lem:three-point}
Let $A,B\ge0$ with $A+B>0$. Fix $s,q\in\R$ with $q\ge s^2/3$. On
\begin{align*}
C_{s,q}=\{(u,v,z):u+v+z=s,\ u^2+v^2+z^2=q\},
\end{align*}
the function $G(u,v,z)=A(e^u+e^v+e^z)+B(e^{-u}+e^{-v}+e^{-z})$ is maximized at a point where at least two of $u,v,z$ are equal. If $q>s^2/3$, no point with three distinct coordinates can be a maximizer.
\end{lemma}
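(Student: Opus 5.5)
The approach is Lagrange multipliers on the compact set $C_{s,q}$, with the second-order condition turned into a sign condition that Lemma~\ref{lem:algebraic} rules out. First, $C_{s,q}$ is a circle when $q>s^2/3$, and a single point $u=v=z=s/3$ when $q=s^2/3$, in which case the claim is trivial. So assume $q>s^2/3$. Then $G$ attains its maximum on $C_{s,q}$. The gradients of the constraints, $(1,1,1)$ and $2(u,v,z)$, are linearly independent at every point with non-constant coordinates, hence everywhere on $C_{s,q}$. It therefore suffices to show that no point $x=(x_1,x_2,x_3)$ with distinct coordinates is a local maximizer.

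For the first-order step, set $f(t)=Ae^t+Be^{-t}$, so that $G=\sum_i f(x_i)$. At a critical point there exist multipliers $\mu,\nu$ with $f'(x_i)=\mu+2\nu x_i$ for $i=1,2,3$. Define
\begin{align*}
h(t)=f'(t)-\mu-2\nu t=Ae^t-Be^{-t}-\mu-2\nu t .
\end{align*}
Then $h(x_i)=0$ at three distinct points, and $h^{(5)}(t)=Ae^t+Be^{-t}>0$ on $\R$, because $A,B\ge0$ and $A+B>0$. By Lemma~\ref{lem:algebraic},
\begin{align*}
\sum_i \frac{h'(x_i)}{p'(x_i)^2}>0, \qquad h'(x_i)=f''(x_i)-2\nu .
\end{align*}

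For the second-order step, the tangent space of $C_{s,q}$ at $x$ is the one-dimensional space of vectors $\xi$ with $\sum\xi_i=0$ and $\sum x_i\xi_i=0$. Take $\xi_i=1/p'(x_i)$. For distinct nodes the divided-difference identities $\sum_i x_i^j/p'(x_i)=0$ hold for $j=0,1$, so this $\xi$ is a nonzero tangent vector. The Hessian of the Lagrangian $G-\mu\sum x_i-\nu\sum x_i^2$ is $\operatorname{diag}\bigl(f''(x_i)-2\nu\bigr)=\operatorname{diag}\bigl(h'(x_i)\bigr)$. Its quadratic form on $\xi$ is therefore
\begin{align*}
\sum_i \frac{h'(x_i)}{p'(x_i)^2}>0 .
\end{align*}
The second-order necessary condition for a constrained local maximum requires this form to be $\le0$ on the tangent space. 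Hence $x$ is not a local maximizer, and in particular not a global one.

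Consequently every maximizer, which exists by compactness, has at least two equal coordinates, and no point with three distinct coordinates maximizes when $q>s^2/3$. The main point to get right is identifying the tangent vector $\xi_i=1/p'(x_i)$, so that the Hessian form matches exactly the sum controlled by Lemma~\ref{lem:algebraic}. Beyond that, one only needs the constraint qualification and the correct sign convention in the second-order condition.
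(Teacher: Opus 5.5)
Your proposal is correct and follows essentially the same route as the paper. Both use the Lagrange condition $f'(x_i)=\mu+2\nu x_i$, the auxiliary function $h$ with $h^{(5)}=Ae^t+Be^{-t}>0$, the tangent vector $\xi_i=1/p'(x_i)$, and Lemma~\ref{lem:algebraic} to contradict the second-order necessary condition; the only difference is your multiplier normalization ($2\nu$ in place of the paper's $\beta$), which is cosmetic.
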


\begin{proof}
If $q=s^2/3$, then Cauchy--Schwarz forces $u=v=z=s/3$, and the conclusion is immediate. Henceforth, assume $q>s^2/3$.

Set $\phi(t)=Ae^t+Be^{-t}$, so that $G=\phi(u)+\phi(v)+\phi(z)$. Since $C_{s,q}$ is compact, $G$ attains its maximum. Suppose, for a contradiction, that some maximizer $x=(x_1,x_2,x_3)$ has pairwise distinct coordinates.

At any point with distinct coordinates, the gradients of the constraint functions $g_1(x)=\sum_{i=1}^3 x_i$ and $g_2(x)=\sum_{i=1}^3 x_i^2$ are linearly independent, so the Lagrange multiplier theorem furnishes $\alpha,\beta\in\R$ such that $\phi'(x_i)=\alpha+\beta x_i$ for $i=1,2,3$. Setting $h(t)=\phi'(t)-\alpha-\beta t$, one has $h(x_i)=0$ for each $i$ and $h^{(5)}(t)=Ae^t+Be^{-t}>0$ for all $t\in\R$.

Define $p(t)=\prod_{i=1}^3(t-x_i)$ and $\eta_i=1/p'(x_i)$. A direct common-denominator calculation gives $\sum_{i=1}^3 1/p'(x_i)=0$ and $\sum_{i=1}^3 x_i/p'(x_i)=0$, so $\eta=(\eta_1,\eta_2,\eta_3)$ is tangent to $C_{s,q}$ at $x$. The second-order necessary condition for a constrained local maximum reads $\sum_{i=1}^3\{\phi''(x_i)-\beta\}\eta_i^2\le0$. Since $\phi''(x_i)-\beta=h'(x_i)$, the left-hand side equals $\sum_{i=1}^3 h'(x_i)/p'(x_i)^2$, which is strictly positive by Lemma~\ref{lem:algebraic}, a contradiction.
\end{proof}

The following proposition is the central structural result of the proof: it shows that on any Hamming slice, the worst-case input has at most two distinct coordinate values, reducing a high-dimensional continuous optimization to a one-dimensional hypergeometric moment bound.

\begin{proposition}\label{prop:two-level}
Fix $1\le k\le N-1$ and $\rho\ge0$, and define
\begin{align*}
F_{N,k}(y)=\log\biggl\{\frac{e_k(e^{y_1},\ldots,e^{y_N})}{\binom{N}{k}}\biggr\}.
\end{align*}
The maximum of $F_{N,k}$ over $\{y\in\R^N:\sum_{i=1}^N y_i=0,\ \|y\|_2=\rho\}$ is attained by a vector with at most two distinct coordinate values.
\end{proposition}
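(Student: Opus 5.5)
The plan is to combine compactness with a three-coordinate perturbation argument that reduces the problem to Lemma~\ref{lem:three-point}. The constraint set $\{y:\sum_i y_i=0,\ \|y\|_2=\rho\}$ is compact and $F_{N,k}$ is continuous, so a maximizer exists. Among all maximizers, I choose one, $y^\ast$, with the smallest number of distinct coordinate values; if that number is at most two we are done. Otherwise, suppose $y^\ast$ has at least three distinct values, and pick three coordinates $i_1,i_2,i_3$ whose values $x_1,x_2,x_3$ are pairwise distinct. The case $\rho=0$ is trivial, since the only feasible point is $y=0$.

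The key structural step is to freeze the other $N-3$ coordinates and view $e_k$ as a function of the three chosen coordinates only. Write $z_j=e^{y_{i_j}}$ and let $E_r$ denote the elementary symmetric polynomials of degree $r$ in the remaining variables $\{e^{y_i}: i\notin\{i_1,i_2,i_3\}\}$. Then
\begin{align*}
e_k(e^{y})=E_k+E_{k-1}\,\sigma_1(z)+E_{k-2}\,\sigma_2(z)+E_{k-3}\,\sigma_3(z),
\end{align*}
where $\sigma_r$ are the elementary symmetric polynomials in $z_1,z_2,z_3$. We vary the three coordinates $(u,v,w)$ while keeping $u+v+w=s$ and $u^2+v^2+w^2=q$ fixed; this preserves both global constraints. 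Along such variations, $\sigma_3=e^{s}$ is constant, and $\sigma_2=e^{s}(e^{-u}+e^{-v}+e^{-w})$. Hence, up to an additive constant, $e_k$ restricted to $C_{s,q}$ equals
\begin{align*}
A(e^u+e^v+e^w)+B(e^{-u}+e^{-v}+e^{-w}), \qquad A=E_{k-1}\ge0,\quad B=E_{k-2}e^{s}\ge0,
\end{align*}
with the convention $E_r=0$ for $r<0$ or $r>N-3$. Since $\log$ is increasing, the point $(x_1,x_2,x_3)$ must maximize this $G$ on $C_{s,q}$.

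To invoke Lemma~\ref{lem:three-point} I need $A+B>0$. For $1\le k\le N-1$ and $N\ge3$, at least one of $E_{k-1}$ or $E_{k-2}$ is a nonempty sum of positive terms, because the indices $k-1$ and $k-2$ cannot both fall outside the range $[0,N-3]$. I would check this edge case carefully, for example $k=1$ (where $A=E_0=1$) and $k=N-1$ (where $B=E_{N-3}e^s>0$). Because the values $x_1,x_2,x_3$ are distinct, $q>s^2/3$. Lemma~\ref{lem:three-point} then says that no point of $C_{s,q}$ with three distinct coordinates is a maximizer, so $(x_1,x_2,x_3)$ cannot maximize $G$. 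This contradicts the maximality of $y^\ast$, and therefore every maximizer has at most two distinct values.

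Note that this contradiction argument is cleaner than a descent argument, which would need the minimality choice and could cycle. The contradiction does not even require the minimal-count selection: any maximizer with three distinct values yields the contradiction immediately. I expect the main obstacle to be the bookkeeping in the second step, namely verifying that $\sigma_3$ is constant on $C_{s,q}$ and that $\sigma_2$ takes the stated $e^{-u}$ form with nonnegative coefficients $A$ and $B$. That computation is exactly what makes Lemma~\ref{lem:three-point} applicable.
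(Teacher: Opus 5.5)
Your proposal is correct and matches the paper's proof step for step. Both freeze the other $N-3$ coordinates, expand $e_k$ with coefficients $E_{k-j}$, and use $\sigma_2=e^{s}(e^{-u}+e^{-v}+e^{-w})$ to reach the objective of Lemma~\ref{lem:three-point} with $A=E_{k-1}$ and $B=E_{k-2}e^{s}$; both then check $A+B>0$ via the same edge cases $k=1$ and $k=N-1$ and conclude by contradiction, and the minimal-count selection of the maximizer is, as you note, unnecessary (the paper omits it).
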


\begin{proof}
The constraint set is compact, so a maximizer exists; $\rho=0$ is immediate, so assume $\rho>0$. Let $y$ be a maximizer, and suppose for a contradiction that $y$ takes three or more distinct values. Choose three coordinates at distinct values, label
them $u,v,z$, and write $R$ for the remaining $N-3$ entries. With the convention $e_j(e^R)=0$ outside $0\le j\le N-3$, set $C_j=e_{k-j}(e^R)\ge0$ for $j=0,1,2,3$. Then
\begin{align*}
e_k(e^{y_1},\ldots,e^{y_N})
=C_0+C_1(e^u+e^v+e^z)+C_2(e^{u+v}+e^{u+z}+e^{v+z})+C_3e^{u+v+z}.
\end{align*}
Since $y$ is a global maximizer, the triple $(u,v,z)$ must also maximize the three-variable objective over $\{u+v+z=s,\ u^2+v^2+z^2=q\}$ with $R$ fixed. Since $u,v,z$ are not all equal, Cauchy--Schwarz gives $q>s^2/3$.

In this sphere section, $C_0$ and $C_3e^s$ are constant. Using $e^{u+v}+e^{u+z}+e^{v+z}=e^s(e^{-u}+e^{-v}+e^{-z})$, the non-constant part reduces to $C_1(e^u+e^v+e^z)+C_2e^s(e^{-u}+e^{-v}+e^{-z})$.

We verify $C_1+C_2>0$: if $k=1$, then $C_1=e_0(e^R)=1>0$; if $k=N-1$, then $C_2=e_{N-3}(e^R)>0$; and if $2\le k\le N-2$, both $e_{k-1}(e^R)$ and $e_{k-2}(e^R)$ are nonzero elementary symmetric polynomials of strictly positive numbers (since $0\le k-2\le N-4$ and $1\le k-1\le N-3$ are both admissible degrees for a vector of length $N-3$). Thus $C_1+C_2>0$, so Lemma~\ref{lem:three-point} applies with $A=C_1$ and $B=C_2e^s$, implying that no maximizer can have three distinct coordinates, a contradiction.
\end{proof}

The two-level reduction leaves a one-dimensional hypergeometric problem. The normalized sampling-without-replacement martingale used below is standard; see, for example, \cite{bardenet2015concentration}. Lemma~\ref{lem:hypergeom} records the resulting MGF bound in the form needed here.

\begin{lemma}\label{lem:hypergeom}
Let $H\sim\mathrm{Hypergeometric}(N,K,m)$ with $0\le K\le N$ and $1\le m\le N-1$, and let $\mu=K/N$. Set $s=m\wedge(N-m)$. Then for every $t\in\R$,
\begin{align*}
\log \E \big[e^{t(H-m\mu)}\big]
\le
\frac{t^2}{8}B_{N,m},
\qquad
B_{N,m}=(N-s)^2\sum_{\ell=N-s}^{N-1}\frac{1}{\ell^2}.
\end{align*}
\end{lemma}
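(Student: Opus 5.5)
We plan to prove the bound through a martingale built by revealing the $m$ sampled items one at a time. First we reduce to the case $m\le N/2$ using complementation. Let $H'=K-H$ be the number of special items among the $N-m$ unsampled ones. Then $H'\sim\mathrm{Hypergeometric}(N,K,N-m)$ and $H-m\mu=-(H'-(N-m)\mu)$. Replacing $t$ by $-t$ and $m$ by $N-m$ leaves the right-hand side unchanged, because $B_{N,m}$ depends on $m$ only through $s$. So we may assume $m=s\le N/2$.

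\textbf{Sampling martingale.} Draw the sample sequentially as $\xi_1,\ldots,\xi_s\in\{0,1\}$, where $\xi_j$ indicates whether the $j$th draw is special. Write $S_j=\sum_{i\le j}\xi_i$. Conditionally on $\mathcal F_{j-1}$, the draw $\xi_j$ is Bernoulli with mean $(K-S_{j-1})/(N-j+1)$. We will use the normalized martingale from \citet{bardenet2015concentration}, namely
\[
M_j=\frac{S_j-j\mu}{N-j}-(\text{correction}),
\]
or equivalently write $H-s\mu=\sum_{j=1}^s c_j D_j$. Here $D_j=\xi_j-\E[\xi_j\mid\mathcal F_{j-1}]$ are martingale differences, and the $c_j$ are deterministic coefficients.

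We compute the coefficients through the identity $\xi_j-\mu=D_j+\frac{(j-1)\mu-S_{j-1}}{N-j+1}$ and unroll the recursion. The drift accumulated up to step $s$ should give $c_j=\frac{N-s}{N-j}$. One checks this for $s=1$: $c_1=1$. For general $s$ one verifies inductively that $S_s-s\mu=(N-s)\sum_{j\le s}D_j/(N-j)$. This follows because $(S_j-j\mu)/(N-j)$ satisfies $\frac{S_j-j\mu}{N-j}-\frac{S_{j-1}-(j-1)\mu}{N-j+1}=\frac{D_j}{N-j}$, which is a short algebraic check using the conditional mean.

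\textbf{MGF bound.} Each $D_j$ lies in an interval of length $1$ conditionally on $\mathcal F_{j-1}$. Apply Lemma~\ref{lem:hoeffding} conditionally with $\theta=t(N-s)/(N-j)$ and peel off one step at a time from $j=s$ down to $j=1$. This gives
\[
\log\E e^{t(H-s\mu)}\le\frac{t^2}{8}(N-s)^2\sum_{j=1}^s\frac{1}{(N-j)^2}.
\]
Reindexing with $\ell=N-j$ gives the sum $\sum_{\ell=N-s}^{N-1}\ell^{-2}$, which is exactly $B_{N,m}$.

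The edge cases are immediate. If $K=0$ or $K=N$, then $H$ is deterministic and the left-hand side is $0$.

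\textbf{Main obstacle.} The key step is the telescoping identity for $(S_j-j\mu)/(N-j)$. It must hold exactly, with $D_j$ centered at the correct conditional mean, so that the coefficients are deterministic. Deterministic coefficients are what make the iterated conditional Hoeffding argument valid.
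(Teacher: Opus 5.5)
Your proposal is correct and is essentially the paper's proof: your $(S_j-j\mu)/(N-j)$ is exactly the paper's $M_j$, your conditionally centered $D_j$ is the paper's $D_j+M_{j-1}$, and the telescoping identity, the conditional Hoeffding step with $\theta=t(N-s)/(N-j)$, and the complementation argument for $m>N/2$ all match. The only differences are cosmetic: you do the complement reduction first, and the placeholder ``correction'' term in your $M_j$ is simply zero.
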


\begin{proof}
Let $(x_1,\ldots,x_N)\in\{0,1\}^N$ satisfy $N^{-1}\sum_{i=1}^N x_i=\mu$. Let $X_1,\ldots,X_m$ be drawn sequentially without replacement, set $D_i=X_i-\mu$
and $S_j=\sum_{i=1}^j D_i$, and let $\calF_j=\sigma(X_1,\ldots,X_j)$ with $\calF_0$ trivial. Then $H-m\mu=S_m$.

\emph{Case $m\le N/2$.} Define $M_j=S_j/(N-j)$. After $j-1$ draws, the remaining centered population sum is $-S_{j-1}$, so $\E[D_j\mid\calF_{j-1}]=-S_{j-1}/(N-j+1)=-M_{j-1}$, and hence
\begin{align*}
M_j-M_{j-1}=\frac{D_j+M_{j-1}}{N-j}.
\end{align*}
Conditional on $\calF_{j-1}$, the random variable $D_j+M_{j-1}$ has mean zero and is supported on an interval of length at most $1$. Hence Lemma~\ref{lem:hoeffding} yields
\begin{align*}
\E\bigl[e^{\theta(M_j-M_{j-1})}\mid\calF_{j-1}\bigr]
\le
\exp\biggl\{\frac{\theta^2}{8(N-j)^2}\biggr\}.
\end{align*}
Iterating the conditional bound via the law of total expectation gives
\begin{align*}
\E \big[e^{\theta M_m}\big]
\le
\exp\biggl\{\frac{\theta^2}{8}\sum_{j=1}^m\frac{1}{(N-j)^2}\biggr\}.
\end{align*}
Since $S_m=(N-m)M_m$, taking $\theta=t(N-m)$ and reindexing $\ell=N-j$ yields
\begin{align*}
\log \E \big[e^{tS_m}\big]
\le
\frac{t^2}{8}(N-m)^2\sum_{\ell=N-m}^{N-1}\frac{1}{\ell^2}.
\end{align*}

\emph{Case $m>N/2$.} The complement of a uniformly sampled $m$-subset is a uniformly sampled $(N-m)$-subset. Since the full centered population sum is zero, the centered sum over the original sample equals the negative of the centered sum over its complement. Since $N-m<N/2$, applying the preceding case to this complement gives
\begin{align*}
\log \E \big[e^{t(H-m\mu)}\big]
\le
\frac{t^2}{8}m^2\sum_{\ell=m}^{N-1}\frac{1}{\ell^2}.
\end{align*}
Combining the two cases via $s=m\wedge(N-m)$ yields the stated bound.
\end{proof}

The following lemma shows that Theorem~\ref{thm:main} strictly improves upon the bound of \cite{barber2024hoeffding} for every $N\ge3$, confirming that the improvement is not merely asymptotic.

\begin{lemma}\label{lem:gamma-comparison}
For every $N\ge3$,
\begin{align*}
\Gamma_N < 1+\epsilon_N,
\end{align*}
where $\epsilon_N=(H_N-1)/(N-H_N)$ and $H_N=\sum_{j=1}^N j^{-1}$. For $N=2$, equality holds.
\end{lemma}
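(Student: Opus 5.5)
The plan is to sandwich the two sides by explicit elementary expressions. I would bound $\Gamma_N$ above using the midpoint-rule estimate from the proof of Theorem~\ref{thm:main}, bound $1+\eps_N$ below using elementary bounds on $H_N$, and check the finitely many small $N$ not covered by the asymptotic comparison by direct computation. The case $N=2$ is immediate: $\Gamma_2=2$ and $H_2=3/2$, so $\eps_2=(1/2)/(1/2)=1$.

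\emph{Upper bound for $\Gamma_N$.} By the closed form, $\Gamma_N=N g_r$ with $r=N-\lfloor N/2\rfloor$. The strict midpoint inequality $S_r<(N-r)/\{(r-1/2)(N-1/2)\}$ proved there gives
\begin{align*}
\Gamma_N<\frac{Nr}{(r-1/2)(N-1/2)}.
\end{align*}
For odd $N=2M+1$ (so $r=M+1$) this simplifies to $U_N:=1+\frac{3/2}{2M+1/2}=1+\frac{3}{2N-1}$. For even $N=2M$ (so $r=M$) it gives $U_N:=\frac{2M^2}{(M-1/2)(2M-1/2)}$. A short expansion shows that in both cases $U_N\le 1+\frac{c}{N}$ for an explicit constant $c$, roughly $3/2$ up to lower-order terms. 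I would record a clean bound valid for all $N\ge 4$, for instance $U_N\le 1+2/N$, and verify it by clearing denominators.

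\emph{Lower bound for $1+\eps_N$.} Since $H_N<N$, we have $\eps_N=(H_N-1)/(N-H_N)>(H_N-1)/N$. It then suffices to have $H_N-1\ge c$, where $c$ is the constant from the previous step (e.g.\ $c=2$, requiring $H_N\ge3$, i.e.\ $N\ge 11$). Using the sharper bound $1+3/(2N-1)$ in the odd case and handling the even case similarly would lower this threshold. For large $N$ one can also use $H_N\ge \log N+\gamma_{\mathrm{EM}}$, but the threshold argument is cleaner because $H_N$ is increasing. This proves the strict inequality for all $N\ge N_0$, with an explicit $N_0$ around $11$.

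\emph{Small $N$.} For $3\le N<N_0$ I would compute $\Gamma_N$ exactly from the closed form and compare it with $1+\eps_N$ computed from $H_N$. For example, at $N=3$: $\Gamma_3=(3\cdot4/2)\cdot\tfrac14=1.5$, while $\eps_3=(11/6-1)/(3-11/6)=5/7$, so $1+\eps_3\approx1.714$. Each remaining case is a finite rational comparison. I expect this to be the main obstacle. The midpoint bound and the crude estimate $\eps_N>(H_N-1)/N$ lose enough that the generic argument only starts at moderate $N$, so a table of exact values is needed for the intermediate range. To shrink the table, I would first try the exact quantity $(H_N-1)/(N-H_N)$ without the crude denominator bound, together with the parity-specific $U_N$, and check whether the generic argument then already covers $N\ge5$.
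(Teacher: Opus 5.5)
Your proposal is correct and is essentially the paper's argument. The midpoint bound you quote, $S_r<(N-r)/\{(r-\tfrac{1}{2})(N-\tfrac{1}{2})\}$, is exactly the paper's telescoping bound from $\ell^{-2}<(\ell-\tfrac{1}{2})^{-1}-(\ell+\tfrac{1}{2})^{-1}$, so it gives the same $U_N$. The only difference is that the paper compares $U_N$ with the exact $1+\eps_N=(N-1)/(N-H_N)$, reducing to $H_N>\tfrac{5}{2}-\tfrac{2}{N}+\tfrac{1}{2N^2}$ (even) or $H_N>\tfrac{5}{2}-\tfrac{3}{N+1}$ (odd); $H_N\ge\tfrac{5}{2}$ suffices, so $N\ge 7$ is covered and only $3\le N\le 6$ is tabulated. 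Your cruder $\eps_N>(H_N-1)/N$ instead pushes the threshold to $N\ge 11$. Answering your closing question, these exact conditions in fact already hold for every $N\ge 3$, so no table is needed at all.

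One small slip: $U_N\le 1+2/N$ fails at $N=4$ ($U_4=32/21>3/2$) and holds only for $N\ge 5$. This is harmless, since you use it only for $N\ge 11$.
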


\begin{proof}
First observe that $1+\epsilon_N=\frac{N-1}{N-H_N}$. We use the algebraic inequality
\begin{align*}
\frac{1}{\ell^2} < \frac{1}{\ell^2 - 1/4} = \frac{1}{\ell - 1/2} - \frac{1}{\ell + 1/2},
\end{align*}
which holds for all $\ell \ge 1$. This yields a telescoping sum, avoiding the need for integral bounds. We prove the claim according to the parity of $N$.
\begin{enumerate}
    \item[(1)]\textbf{Even case.} By the closed form of $\Gamma_N$,
    \begin{align*}
    \Gamma_N = N\sum_{\ell=N/2}^{N-1}\frac{1}{\ell^2} & < N \sum_{\ell=N/2}^{N-1} \Big(\frac{1}{\ell - 1/2} - \frac{1}{\ell + 1/2} \Big) \\
    &= N\cdot\Big(\frac{1}{N/2 - 1/2} - \frac{1}{N - 1/2}\Big) = \frac{2N^2}{(N-1)(2N-1)}.
    \end{align*}
    It therefore suffices to show $\frac{2N^2}{(N-1)(2N-1)} < \frac{N-1}{N-H_N}$, which is equivalent to $H_N > \frac{5}{2} - \frac{2}{N} + \frac{1}{2N^2}$.
    \item[(2)] \textbf{Odd case.} By the closed form of $\Gamma_N$,
    \begin{align*}
    \Gamma_N = \frac{N(N+1)}{N-1}\sum_{\ell=(N+1)/2}^{N-1}\frac{1}{\ell^2} &< \frac{N(N+1)}{N-1} \sum_{\ell=(N+1)/2}^{N-1} \Big( \frac{1}{\ell - 1/2} - \frac{1}{\ell + 1/2} \Big) \\
    &= \frac{N(N+1)}{N-1} \Big( \frac{1}{(N+1)/2 - 1/2} - \frac{1}{N - 1/2} \Big) = \frac{2(N+1)}{2N-1}.
    \end{align*}
    It therefore suffices to show $\frac{2(N+1)}{2N-1} < \frac{N-1}{N-H_N}$, which is equivalent to $H_N > \frac{5}{2} - \frac{3}{N+1}$.
\end{enumerate}
Notice that for $N \ge 2$, the terms $(-\tfrac{2}{N} + \tfrac{1}{2N^2})$ and $(- \tfrac{3}{N+1})$ are strictly negative. Therefore, the single condition $H_N \ge 5/2$ is sufficient to satisfy the inequalities for both the even and odd cases.

For every $N \ge 7$, we have $H_N \ge H_7 = 363/140 > 5/2$, establishing the inequality for all $N \ge 7$. The remaining cases for $N \in \{3, 4, 5, 6\}$ are checked directly
\begin{align*}
\Gamma_3&=\tfrac{3}{2}<\tfrac{12}{7}=1+\epsilon_3,\qquad
&\Gamma_4&=\tfrac{13}{9}<\tfrac{36}{23}=1+\epsilon_4,\\
\Gamma_5&=\tfrac{125}{96}<\tfrac{240}{163}=1+\epsilon_5,\qquad
&\Gamma_6&=\tfrac{769}{600}<\tfrac{100}{71}=1+\epsilon_6.
\end{align*}
Finally, for $N=2$, the closed form yields $\Gamma_2=2=1+\epsilon_2$, satisfying the equality. This completes the proof.
\end{proof}

\subsection{Proof of Proposition~\ref{prop:lower-bound}}\label{app:proof-lower}

\begin{proof}[Proof of Proposition~\ref{prop:lower-bound}]
Fix a nonzero vector $a\in\one^\perp$. Fix $1\le k\le N-1$, and let $X$ be uniformly distributed on the Hamming slice $\{x\in\{\pm1\}^N:\#\{i:x_i=1\}=k\}$, which is exchangeable by symmetry. This choice is admissible in Theorem~\ref{thm:main} by taking $n=N$ and $w=a$, since $a\in\one^\perp$ implies $\proj w=a$. Writing $S_k=\{i:X_i=1\}$ gives
\begin{align*}
\sum_{i=1}^N a_iX_i
=
2\sum_{i\in S_k}a_i
=:
Y.
\end{align*}
Since $a\in\one^\perp$, we have $\E Y=0$. Hence, as $\lambda\to0$,
\begin{align*}
\log\E e^{\lambda Y}
=
\frac{\lambda^2}{2}\Var(Y)+o(\lambda^2).
\end{align*}
Therefore, if
\begin{align*}
\log\E e^{\lambda Y}
\le
\frac{\lambda^2}{2}C_N^\star\|a\|_2^2
\end{align*}
holds for all $\lambda\in\R$, then letting $\lambda\to0$ gives $\Var(Y)\le C_N^\star\|a\|_2^2.$

Denoting by $I_i=\mathds{1}(i\in S_k)$ the membership indicators, the hypergeometric variance formulas give
\begin{align*}
\Var(I_i)=\frac{k(N-k)}{N^2},
\qquad
\Cov(I_i,I_j)=-\frac{k(N-k)}{N^2(N-1)},
\qquad i\ne j.
\end{align*}
Since $\sum_{i=1}^N a_i=0$ implies $\sum_{i\ne j}a_i a_j=-\|a\|_2^2$,
\begin{align*}
\Var\biggl(\sum_{i\in S_k}a_i\biggr)
=\frac{k(N-k)}{N(N-1)}\|a\|_2^2,
\end{align*}
and hence $\Var(Y)=4k(N-k)\|a\|_2^2/[N(N-1)]$. Since $k\in\{1,\ldots,N-1\}$ was arbitrary,
\begin{align*}
C_N^\star\ge\max_{1\le k\le N-1}\frac{4k(N-k)}{N(N-1)}.
\end{align*}
The maximum is attained at $k=\lfloor N/2\rfloor$, and a direct computation yields the stated bound.
\end{proof}

\paragraph{Acknowledgement.}
The authors used AI-assisted tools for brainstorming proof strategies, checking algebraic manipulations, and improving exposition. The authors reviewed and verified all mathematical claims and final wording and take full responsibility for the content.

\bibliographystyle{apalike}
\bibliography{reference}

\end{document}